\documentclass[11pt]{article}
\usepackage{mathrsfs}
\usepackage{amsmath}
\usepackage{amsfonts}
\usepackage{amssymb}
\setcounter{page}{1}
\usepackage{latexsym}
\usepackage{enumerate}
\usepackage[dvips]{graphicx}
 \usepackage{graphicx}
\setlength{\topmargin}{0in} \setlength{\oddsidemargin}{0cm}
\setlength{\textheight}{24cm} \setlength{\textwidth}{17cm}
\newtheorem{theorem}{\bf Theorem}[section]
\newtheorem{lemma}[theorem]{\bf Lemma}

\newtheorem{corollary}[theorem]{\bf Corollary}

\newenvironment{proof}{\noindent{\em Proof:}}{\quad \hfill$\Box$\vspace{2ex}}

\newtheorem{definition}[theorem]{\bf Definition}

\pagestyle{myheadings}


\def \bI {\Bbb I}

\def \bR {\Bbb R}

\def \bB {\Bbb B}
\def \bC {\Bbb C}


\def \cR {{\cal R}}

\def \and {\,\mbox{\rm and}\,}

\def \supp {\,{\rm supp}\,}

\def \sgn {\,{\rm sgn}\,}

\def \ae {\,{\rm a.e.}\,}
\def \conv {\,{\rm conv}\,}
\makeatletter

\newcommand{\Rmnum}[1]{\expandafter\@slowromancap\romannumeral #1@}
\makeatother

\begin{document}

\title{Existence of the Bedrosian Identity for Singular Integral Operators}

\author{Rongrong Lin\thanks{School of Mathematics and Computational
Science, Sun Yat-sen University, Guangzhou 510275, P. R. China. E-mail address: {\it linrr@mail2.sysu.edu.cn}.}\quad and\quad Haizhang Zhang\thanks{Corresponding author. School of Mathematics and Computational
Science and Guangdong Province Key Laboratory of Computational
Science, Sun Yat-sen University, Guangzhou 510275, P. R. China. E-mail address: {\it zhhaizh2@sysu.edu.cn}. Supported in part by Natural Science Foundation of China under grants 11222103 and 11101438.}}
 \date{}
\maketitle

%
%

\begin{abstract}
The Hilbert transform $H$ satisfies the Bedrosian identity $H(fg)=fHg$ whenever the supports of the Fourier transforms of $f,g\in L^2(\bR)$ are respectively contained in $A=[-a,b]$ and $B=\bR\setminus(-b,a)$, $0\le a,b\le+\infty$. Attracted by this interesting result arising from the time-frequency analysis, we investigate the existence of such an identity for a general bounded singular integral operator on $L^2(\bR^d)$ and for general support sets $A$ and $B$. A geometric characterization of the support sets for the existence of the Bedrosian identity is established. Moreover, the support sets for the partial Hilbert transforms are all found. In particular, for the Hilbert transform to satisfy the Bedrosian identity, the support sets must be given as above.

\noindent{\bf Keywords:} the Bedrosian identity, singular integral operators, translation-invariant operators, the Hilbert transform, the Riesz transforms
\end{abstract}

\emph{}
\section{Introduction}
\setcounter{equation}{0}

This note is motivated by the important Bedrosian identity in the time-frequency analysis, \cite{Bedrosian,Brown74,Brown86,Cohen}. The analytic signal method is a key tool in extracting the frequency information of a given signal, \cite{Picibono}. The method engages the Hilbert transform to form the imaginary part of a real signal. We recall that the Hilbert transform $H$ is a bounded linear operator on $L^2(\bR)$ defined by
$$
(Hf)\hat{\,}(\xi)=-i\sgn(\xi)\hat{f}(\xi),\ \ \xi\in\bR,\ f\in L^2(\bR),
$$
where $\sgn(\xi)$ takes the value $-1,0,1$ for $\xi<0$, $\xi=0$, and $\xi>0$, respectively. The Fourier transform adopted in this note has the form
$$
\hat{f}(\xi):=\frac1{(2\pi)^{d/2}}\int_{\bR^d} f(x)e^{-ix\cdot\xi}dx,\ \ \xi\in\bR^d
$$
for $f\in L^1(\bR^d)$. The Hilbert transform is also defined on $L^p(\bR)$ for $1\le p<+\infty$ as the singular integral operator with the convolution kernel p.v. $\frac1{\pi x}$, \cite{Butzer}. Note that the Hilbert transform is {\it translation-invariant} in the sense that
$H(\tau_xf)=\tau_x(Hf)$ for all $f\in L^2(\bR)$ and $x\in\bR$. Here, $\tau_xf:=f(\cdot-x)$.

The Bedrosian theorem, established in the time-frequency analysis \cite{Bedrosian,Brown74,Brown86,Nuttall}, states that the identity
\begin{equation}\label{Bedrosian}
H(fg)=fHg
\end{equation}
holds true if the supports of the Fourier transforms of $f,g\in L^2(\bR)$ satisfy $\supp\hat{f}\subseteq[-a,b]$ and $\supp\hat{g}\subseteq\bR\setminus(-b,a)$ for some nonnegative constants $0\le a,b\le+\infty$.

We call (\ref{Bedrosian}) the {\it Bedrosian identity}. The functions $f,g$ in (\ref{Bedrosian}) can be naturally interpreted as the amplitude and phase functions of a given signal, respectively. This physical interpretation accounts for its usefulness in the time-frequency analysis \cite{Cohen} and latter in the empirical mode decomposition (EMD) \cite{Huang}. Especially, the success of EMD in signal analysis has stimulated much interest from the mathematics community. Various characterizations of the Bedrosian identity and reproofs of the Bedrosian theorem have been established (see, for example, \cite{Qian2009,Tan2010,Tan2009,Wang,Xu,YangandZhang,ZhangandYu}).

Fascinated by the Bedrosian identity, we shall investigate in the note the existence of such an identity for a general bounded singular integral operator on $L^2(\bR^d)$. To make our objective more precise, we introduce the following definition.

\begin{definition}
We say that a bounded linear operator $T:L^{2}(\bR^{d})\rightarrow L^{2}(\bR^{d})$ satisfies the Bedrosian identity with respect to a pair $(A,B)$ of Lebesgue measurable sets $A,B\subseteq\bR^d$ if the identity
\begin{equation}\label{bedrosianidentityT}
    T(fg)=f (Tg)
\end{equation}
 holds true for all  $f,g\in L^{2}(\bR^{d})\cap L^{\infty}(\bR^d)$ with
$$
\supp\hat{f}\subseteq A, \ \  \supp\hat{g}\subseteq B.
$$
\end{definition}

Using the above definition, we can restate the Bedrosian theorem as that the Hilbert transform $H$ satisfies the Bedrosian identity with respect to $([-a,b],\bR\setminus (-b,a))$ for any nonnegative constants $0\le a,b\le+\infty$.

Now, from the pure mathematics perspective, we ask the following three questions.
\begin{description}
\item[(A):] For what support sets $A,B$, does there exist a nontrivial bounded singular integral operator $T$ on $L^2(\bR^d)$ that satisfies the Bedrosian identity with respect to $(A,B)$?

\item[(B):] How to characterize such an operator?

\item[(C):] Conversely, given a class of bounded singular integral operators, can we find a pair of support sets $A,B$ so that each operator in the class satisfies the Bedrosian identity with respect to $(A,B)$? For instance, can the Riesz transforms or other bounded singular integral operators in harmonic analysis satisfy the Bedrosian identity with respect to some support sets?
\end{description}

In this note, we will be able to give a complete answer to the first two questions, and to the third one for the partial Hilbert transforms and for the Riesz transforms. To describe our results, we shall recall some definitions and known facts. Firstly, we consider singular integral operators $T$ of the form
$$
(Tf)(x):=\mbox{p.v. }\int_{\bR^d}f(y)K(x-y)dy,
$$
where $K$ is a prescribed integral kernel such that $T$ is bounded on $L^2(\bR^d)$. We note that such an operator $T$ is {\it translation-invariant}, that is, $T(\tau_x f)=\tau_x (Tf)$ for all $x\in\bR^d$ and $f\in L^2(\bR^d)$. It is well-known (see, for example, \cite{Grafakos}, page 140) that a bounded linear operator $T$ on $L^2(\bR^d)$ is translation-invariant if and only if there exists a function $m\in L^\infty(\bR^d)$ such that
\begin{equation}\label{fouriermultiplier}
(Tf)\hat{\,}=m\hat{f},\ \ f\in L^2(\bR^d).
\end{equation}
The function $m$ above is called the {\it Fourier multiplier} of the operator $T$. Secondly, we shall mainly restrict our discussion on support sets that are open. Some geometric treatments are needed.

\begin{definition} {\bf(Characteristic sets)} Let $A,B$ be two nonempty open subsets in $\bR^d$. We decompose $A,B$ and $A+B$ into unions of their corresponding connected components as follows:
\begin{equation}\label{components}
A=\bigcup_{i\in I} A_i,\ B=\bigcup_{j\in J} B_j,\ A+B=\bigcup_{k\in K} C_k,
\end{equation}
where $I,J,K$ are index sets. Also denote for each $k\in K$ by
\begin{equation}\label{components1}
J_k:=\{j\in J:A_i+B_j\subseteq C_k\mbox{ for some }i\in I\}
\end{equation}
and introduce
\begin{equation}\label{components2}
D_k:=C_k\cup\bigcup_{j\in J_k}B_j.
\end{equation}
We call these $D_k$ the {\it characteristic sets} of $(A,B)$ for the problem under investigation.
\end{definition}

The main results of this note are described as follows.

\begin{theorem}\label{main1}
Let $A,B$ be two nonempty open subsets in $\bR^d$ and let $D_k$, $k\in K$ be the characteristic sets of $(A,B)$ defined above. Suppose $T$ is a bounded linear operator on $L^2(\bR^d)$ given by (\ref{fouriermultiplier}) through a Fourier multiplier $m\in L^\infty(\bR^d)$. Then $T$ satisfies the Bedrosian identity with respect to $(A,B)$ if and only if there exist constants $c_k\in\bC$, $k\in K$ such that
$$
m=c_k\mbox{ almost everywhere on }D_k,\ \ k\in K.
$$
Here, if $D_k\cap D_{k'}$ is nonempty then $c_k=c_{k'}$.
\end{theorem}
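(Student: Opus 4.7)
The plan is to pass to the Fourier side. Using the convolution identity $\widehat{fg}=(2\pi)^{-d/2}\hat f *\hat g$ and $(Tg)\hat{\,}=m\hat g$, the Bedrosian identity (\ref{bedrosianidentityT}) is equivalent to
\[
\int_{\bR^d}\bigl[m(\xi)-m(\xi-\eta)\bigr]\,\hat f(\eta)\hat g(\xi-\eta)\,d\eta=0\quad\text{for a.e. }\xi\in\bR^d,
\]
whenever $\hat f$ is supported in $A$ and $\hat g$ in $B$. Sufficiency is then immediate: at any $(\xi,\eta)$ where the integrand is nonzero one has $\eta\in A_i$ and $\xi-\eta\in B_j$ for some $i,j$; the connected set $A_i+B_j$ contains $\xi\in A+B$ and is therefore contained in the unique component $C_k$ of $A+B$ through $\xi$, so $j\in J_k$; this places both $\xi$ and $\xi-\eta$ in $D_k$, where $m=c_k$, killing the difference $m(\xi)-m(\xi-\eta)$.

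For necessity the key step is to localize the integral identity. For any $\eta_0\in A_i$ and $v_0\in B_j$ with $\xi_0:=\eta_0+v_0\in C_k$ (so automatically $j\in J_k$), choose small balls with $\overline{B(\eta_0,r)}\subseteq A_i$ and $\overline{B(v_0,s)}\subseteq B_j$, and test with $\hat f=\chi_{B(\eta_0,r)}$, $\hat g=\chi_{B(v_0,s)}$, producing admissible $f,g\in L^2(\bR^d)\cap L^\infty(\bR^d)$. For $\xi$ in a sufficiently small ball $W$ around $\xi_0$ and $\eta\in B(\eta_0,r)$ the point $\xi-\eta$ stays in $B(v_0,s)$, so the integral equation collapses to
\[
m(\xi)=\frac{1}{|B(\eta_0,r)|}\int_{B(\eta_0,r)} m(\xi-\eta)\,d\eta\quad\text{for a.e. }\xi\in W.
\]
Letting $r\to 0$ along a countable sequence, the right-hand side converges to $m(\xi-\eta_0)$ at every $\xi$ for which $\xi-\eta_0$ is a Lebesgue point of the bounded measurable function $m$, hence for a.e. $\xi\in W$. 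Covering $\eta_0+B_j$ by such neighborhoods yields, for every $\eta_0\in A_i$, the identity $m(\eta_0+v)=m(v)$ for a.e. $v\in B_j$.

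Globalizing these local identities to constancy of $m$ on each $D_k$ is the main obstacle. By Fubini on $A_i\times B_j$ (for any pair with $A_i+B_j\subseteq C_k$), for a.e. $v\in B_j$ the function $m$ equals the constant $m(v)$ a.e. on the translate $v+A_i\subseteq C_k$. If two such $v,v'\in B_j$ satisfy $v-v'\in A_i-A_i$, which is an open neighborhood of the origin, then $v+A_i$ and $v'+A_i$ overlap in positive measure, forcing $m(v)=m(v')$. A chaining argument along a polygonal path in the connected open set $B_j$, interpolating through points of the full-measure ``good'' set so that consecutive jumps fit inside $A_i-A_i$, shows $m$ is a.e. equal to a single constant $c_{k,j}$ on $B_j$, and therefore also on $A_i+B_j$ by the Fubini step. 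Since $C_k$ is the connected union of the patches $A_i+B_j\subseteq C_k$, any two such patches can be linked by a chain of overlapping patches in $C_k$, which forces all constants $c_{k,j}$ to agree as a common value $c_k$. Thus $m=c_k$ a.e. on $D_k=C_k\cup\bigcup_{j\in J_k}B_j$, and the compatibility $c_k=c_{k'}$ on nonempty intersections is automatic since $m$ is a well-defined $L^\infty$ function.
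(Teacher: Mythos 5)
Your proof is correct, and it reaches the paper's characterization by a somewhat different technical route within the same overall skeleton (constancy on each $B_j$ first, then on $A_i+B_j$, then chaining over $C_k$ to get $D_k$). The paper's key tool is its Lemma 2.1: testing with $\hat f=\chi_{\xi-E}$, $\hat g=\chi_E$ for \emph{arbitrary} measurable $E\subseteq(\xi-A)\cap B$ yields the sharp pointwise condition that for a.e.\ $\xi$, $m(\eta)=m(\xi)$ for a.e.\ $\eta\in(\xi-A)\cap B$; constancy on $B_j$ is then obtained by a covering argument (every $\eta\in B_j$ lies in $(\xi'-A_i)\cap B_j$ for some ``good'' $\xi'$) combined with the lemma that a function locally a.e.\ constant on a connected open set is a.e.\ constant. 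You instead test only with characteristic functions of small balls, collapse the identity to a ball-average equation, and invoke the Lebesgue differentiation theorem to extract the translation identities $m(\eta_0+v)=m(v)$ for a.e.\ $v\in B_j$ and every $\eta_0\in A_i$; globalization is then done by Fubini plus an explicit chain through good points with jumps inside $A_i-A_i$ (which contains a ball around $0$), and by chaining overlapping patches $A_i+B_j$ inside the connected component $C_k$. What each buys: the paper's arbitrary-test-set lemma is cleaner and avoids differentiation theory, while your version is more hands-on and makes the translation-invariance of $m$ under $A$-shifts conceptually explicit; both hinge on connectedness for the local-to-global step. Two routine points you gloss over and should record: the index sets $I,J,K$ are countable (components of open sets in $\bR^d$), which you need when discarding countably many null sets and when asserting in the sufficiency direction that the exceptional set is null; and the passage from ``$m=c_{k,j}$ a.e.\ on $v+A_i$ for a.e.\ $v\in B_j$'' to ``$m=c_{k,j}$ a.e.\ on $A_i+B_j$'' needs either a Lindel\"of/density argument or the observation that $\chi_{A_i}*\chi_{B_j}>0$ everywhere on the open set $A_i+B_j$.
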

The above theorem answers question {\bf (B)}. One can also tell from this result that for a bounded singular integral operator $T$ on $L^2(\bR^d)$ to satisfy the Bedrosian identity, its Fourier multiplier must be constant on a set of positive Lebesgue measure. Thus, when $d\ge2$, the Riesz transforms $\cR_j$'s defined by
$$
(\cR_jf){\hat{\,}}(\xi):=\frac{-i\xi_j}{\|\xi\|}\hat{f}(\xi), \ \ \xi\in\bR^d,\  f\in L^2(\bR^d),\ \ 1\le j\le d
$$
cannot satisfy a Bedrosian identity. Here, $\|\cdot\|$ denotes the standard Euclidean norm on $\bR^d$.

We are able to answer question {\bf (A)} as a direct consequence of Theorem \ref{main1}.

\begin{corollary}\label{main2}
There exists a nontrivial bounded linear translation-invariant operator on $L^2(\bR^d)$ that satisfies the Bedrosian identity with respect to a pair of nonempty open subsets $A,B\subseteq\bR^d$ if and only if for every $k\in K$, the set complement of $\cup\{D_{k'}:k'\in K, \ c_{k'}=c_k\}$ in $\bR^d$ has positive Lebesgue measure.
\end{corollary}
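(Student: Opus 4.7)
The plan is to reduce the statement directly to Theorem \ref{main1}. By that theorem, every bounded translation-invariant operator $T$ on $L^2(\bR^d)$ satisfying the Bedrosian identity with respect to $(A,B)$ arises from a Fourier multiplier $m\in L^\infty(\bR^d)$ of the form $m=c_k$ almost everywhere on each $D_k$, for some collection of constants $\{c_k\}_{k\in K}\subseteq\bC$ subject to $c_k=c_{k'}$ whenever $D_k\cap D_{k'}\ne\emptyset$. I interpret \emph{nontrivial} to mean that $T\ne cI$ for every $c\in\bC$, equivalently, that $m$ is not almost everywhere equal to a single constant on $\bR^d$. Thus the task is to characterize when such a non-constant $m$ exists subject to these constraints.

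For the necessity direction, suppose a nontrivial $T$ exists, with multiplier $m$ and associated constants $\{c_k\}_{k\in K}$ supplied by Theorem \ref{main1}. I would fix any $k\in K$ and observe that $\{x\in\bR^d:m(x)\ne c_k\}$ has positive Lebesgue measure, since $m$ is not essentially equal to the constant $c_k$. But $m=c_{k'}=c_k$ almost everywhere on each $D_{k'}$ with $c_{k'}=c_k$, so this set is contained, up to a null set, in $\bR^d\setminus\bigcup\{D_{k'}:k'\in K,\ c_{k'}=c_k\}$, yielding positive Lebesgue measure for the latter.

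For the sufficiency direction, assume compatible constants $\{c_k\}_{k\in K}$ as in Theorem \ref{main1} exist and that for every $k\in K$ the complement of $\bigcup\{D_{k'}:k'\in K,\ c_{k'}=c_k\}$ in $\bR^d$ has positive Lebesgue measure. Since $A$ and $B$ are open, $A+B$ is open, so the index set $K$ is at most countable, and I can pick some $c^*\in\bC\setminus\{c_k\}_{k\in K}$. I would then define $m\in L^\infty(\bR^d)$ by setting $m=c_k$ on each $D_k$ (well-defined by the compatibility assumption) and $m=c^*$ on $E:=\bR^d\setminus\bigcup_{k\in K}D_k$. By Theorem \ref{main1} the resulting operator $T$ satisfies the Bedrosian identity with respect to $(A,B)$. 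For every $k\in K$, the set $\{x\in\bR^d:m(x)\ne c_k\}$ coincides up to a null set with $\bR^d\setminus\bigcup\{D_{k'}:c_{k'}=c_k\}$, which has positive measure by hypothesis; and $\{x\in\bR^d:m(x)\ne c^*\}$ contains $\bigcup_{k\in K}D_k$ which has positive measure since each $D_k\supseteq C_k$ is nonempty and open. So $m$ is not almost everywhere equal to any single constant, and $T$ is not a scalar multiple of the identity.

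I do not anticipate a serious obstacle; the entire content sits in Theorem \ref{main1}. The only technical care is the verification that the constructed $m$ is genuinely non-constant for every candidate value, which rests on the standard fact that open subsets of $\bR^d$ have at most countably many connected components so a value $c^*$ outside $\{c_k\}_{k\in K}$ can always be chosen.
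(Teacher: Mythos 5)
Your proposal is correct and follows essentially the route the paper intends: the paper gives no separate argument for Corollary \ref{main2}, presenting it as a direct consequence of Theorem \ref{main1}, which is exactly how you argue (your reformulation in terms of an admissible assignment of constants is equivalent to the paper's condition, since constants may always be chosen constant on, and distinct across, the equivalence classes generated by intersecting $D_k$'s). The one point to tighten is in the sufficiency construction: the hypothesized constants $c_k$ need not be bounded when $K$ is infinite, so before defining $m$ you should replace them by a bounded injective relabeling of their distinct values (e.g.\ values of the form $1/n$), which preserves both the compatibility condition and the positive-measure hypothesis and guarantees $m\in L^\infty(\bR^d)$.
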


Finally, we answer question {\bf (C)} for the partial Hilbert transforms on $L^2(\bR^d)$. These operators are defined as follows:
\begin{equation}\label{partialHilbert}
(H_jf){\hat{\,}}(\xi):=-i\sgn(\xi_j)\hat{f}(\xi), \ \  \xi\in\bR^d,\  f\in L^2(\bR^d),\ 1\le j\le d.
\end{equation}
The partial Hilbert transforms turn out to be the ideal singular integral operators for the time-frequency of multi-dimensional signals, \cite{Bulow,Hahn}.

Denote for each Lebesgue measurable set $E\subseteq\bR^d$ by $\tilde{E}$ the set of all points $x_0\in \bR^d$ with the property that $U\cap E$ is of positive Lebesgue measure for any open neighborhood $U$ of $x_0$.

\begin{theorem}\label{main3}
Let $A,B$ be two Lebesgue measurable sets in $\bR^d$. Then all partial Hilbert transforms $H_j$, $1\le j\le d$ satisfy the Bedrosian identity with respect to $(A,B)$ if and only if there exist constants $0\le a_j,b_j\le +\infty$, $1\le j\le d$ such that
\begin{equation}\label{supportsetsformain3}
\tilde{A}\subseteq \prod_{j=1}^d[-a_j,b_j],\  \ \tilde{B}\subseteq\prod_{j=1}^d \bR\setminus(-b_j,a_j).
\end{equation}
\end{theorem}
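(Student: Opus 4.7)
The plan is to work on the Fourier side, reducing the identity $H_j(fg)=fH_jg$ to a pointwise sign condition on $\tilde A\times\tilde B$ for each $j$, and then to read off the intervals $[-a_j,b_j]$ from the extremes of the $j$-th coordinate on $\tilde A$. First, since $A\setminus\tilde A$ and $B\setminus\tilde B$ are Lebesgue null by the density theorem, $\supp\hat f\subseteq A$ and $\supp\hat g\subseteq B$ are equivalent to $\hat f,\hat g$ being essentially supported in the closed sets $\tilde A,\tilde B$, and I may replace $A,B$ by $\tilde A,\tilde B$ throughout. For $f,g\in L^2(\bR^d)\cap L^\infty(\bR^d)$ we have $\widehat{fg}=(2\pi)^{-d/2}\hat f*\hat g$, and a direct computation gives
$$
\widehat{H_j(fg)}(\xi)-\widehat{fH_jg}(\xi)=\frac{-i}{(2\pi)^{d/2}}\int_{\bR^d}\hat f(\xi-\eta)\bigl[\sgn(\xi_j)-\sgn(\eta_j)\bigr]\hat g(\eta)\,d\eta.
$$
Testing with $\hat f,\hat g$ equal to characteristic functions of shrinking product neighbourhoods of Lebesgue density points (intersected with $\tilde A,\tilde B$) and avoiding the null hyperplanes where $\sgn$ changes, I will show that $H_j(fg)=fH_jg$ for every admissible pair is equivalent to
$$
\sgn(\alpha_j+\beta_j)=\sgn(\beta_j)\quad\text{for almost every }(\alpha,\beta)\in\tilde A\times\tilde B.
$$

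For sufficiency, assume (\ref{supportsetsformain3}) and fix $j$. For almost every $(\alpha,\beta)\in\tilde A\times\tilde B$ we have $\beta_j\neq 0$, and either $\beta_j\geq a_j$ together with $\alpha_j\geq -a_j$ (forcing $\alpha_j+\beta_j\geq 0$), or $\beta_j\leq -b_j$ together with $\alpha_j\leq b_j$ (forcing $\alpha_j+\beta_j\leq 0$); outside the null set $\{\alpha_j+\beta_j=0\}$ the required sign identity follows. For necessity, assume the sign identity holds for every $j$ and set
$$
a_j:=\max\Bigl(0,-\operatorname*{ess\,inf}_{\alpha\in\tilde A}\alpha_j\Bigr),\quad b_j:=\max\Bigl(0,\operatorname*{ess\,sup}_{\alpha\in\tilde A}\alpha_j\Bigr)\in[0,+\infty],
$$
with the conventions $\operatorname{ess\,inf}\emptyset:=+\infty$ and $\operatorname{ess\,sup}\emptyset:=-\infty$. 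The containment $\tilde A\subseteq\prod_j[-a_j,b_j]$ is immediate. For the other, fix $\beta\in\tilde B$ and $j$: if $\beta_j>0$, the sign identity yields $\alpha_j\geq -\beta_j$ for almost every $\alpha\in\tilde A$, hence $a_j\leq\beta_j$; symmetrically $\beta_j<0$ gives $\beta_j\leq -b_j$; and $\beta_j=0$ forces $\tilde A$ to lie in the hyperplane $\{\alpha_j=0\}$, making $a_j=b_j=0$ and $(-b_j,a_j)$ empty. Thus $\beta_j\notin(-b_j,a_j)$ in every case, proving $\tilde B\subseteq\prod_j(\bR\setminus(-b_j,a_j))$.

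The main obstacle is the ``only if'' reduction in the first paragraph. Extracting the pointwise sign identity from the vanishing of the displayed integral requires localizing $\hat f,\hat g$ to product neighbourhoods of density points of $\tilde A,\tilde B$ whose $j$-th coordinates, together with their sum, are all nonzero; on such a small neighbourhood the bracketed sign factor is a constant, so the integral collapses to that constant times a nonvanishing convolution of characteristic functions, which forces the constant to be zero. Keeping track of the measure-zero ambiguities around the coordinate hyperplanes, and verifying that the test pairs $(f,g)$ so constructed lie in $L^2\cap L^\infty$ (their Fourier transforms are compactly supported bounded functions, so $f,g$ are Schwartz-type and in particular bounded), is where the bulk of the technical work lies.
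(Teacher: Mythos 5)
Your reduction to the sign condition via localized characteristic-function tests is sound and is a genuinely different route from the paper's necessity argument, which goes through a support condition on $\hat f*(\hat g\chi_{\bR^d_{j\pm}})$ (Lemma 3.6) and the Titchmarsh convolution theorem. Since your test pair makes the factor $\sgn(\xi_j)-\sgn(\eta_j)$ locally constant and the convolution $\chi_{A\cap U}*\chi_{B\cap V}$ has integral $|A\cap U|\,|B\cap V|>0$, Titchmarsh is indeed not needed; moreover your argument actually gives $\sgn(\alpha_j+\beta_j)=\sgn(\beta_j)$ at every pair $(\alpha,\beta)\in\tilde A\times\tilde B$ with $\beta_j\neq0$ and $\alpha_j+\beta_j\neq0$, not merely at almost every pair. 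Your self-contained sufficiency argument is also fine (the paper cites earlier work for that direction).

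The gap is in passing from the sign condition to the containments, precisely at the coordinate hyperplanes. First, you state the sign condition only for a.e. $(\alpha,\beta)\in\tilde A\times\tilde B$, but then fix an arbitrary $\beta\in\tilde B$ and use it for a.e. $\alpha$; Fubini gives this only for a.e. $\beta$, whereas the conclusion $\tilde B\subseteq\prod_{j=1}^d\bR\setminus(-b_j,a_j)$ must hold at every point of $\tilde B$. This is repairable when $\beta_j\neq0$, either by using the pointwise off-hyperplane version noted above or by approximating $\beta$ with nearby essential points of $B$. Second, and more seriously, your case $\beta_j=0$ rests on a false claim: a point $\beta\in\tilde B$ with $\beta_j=0$ does not force $\tilde A\subseteq\{\alpha_j=0\}$ and $a_j=b_j=0$. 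Take $d=1$, $A=(0,1)$, $B=(0,\infty)$: the Bedrosian identity holds (Bedrosian's theorem with $a=0$, $b=+\infty$), and $0\in\tilde B$, yet $\tilde A=[0,1]$. Your argument cannot reach such $\beta$ because the slice $\{\beta_j=0\}$ is null, so neither the a.e. nor the off-hyperplane pointwise sign condition says anything about it. What is true, and all you need, is that $a_j=0$ or $b_j=0$ in this case; you can obtain it by choosing essential points of $B$ near $\beta$ with nonzero $j$-th coordinate tending to $0$ (they exist because every ball around $\beta$ meets $B$ in positive measure and a.e. point of $B$ lies in $\tilde B$ off the hyperplane) and passing to the limit in the inequalities from your two nondegenerate cases. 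With these repairs the proof closes; as written, the $\beta_j=0$ case is wrong and the pointwise use of an a.e. statement is unjustified.
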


Theorems \ref{main1} and \ref{main3} will be proved in Sections 2 and 3, respectively. We shall then present examples and explanation to help explore the insight of these results in Section 4.

\section{A Geometric Characterization}
\setcounter{equation}{0}
We shall prove in this section Theorem \ref{main1}, which provides a geometric characterization of the Bedrosian identity.

Let $m\in L^\infty(\bR^d)$ and let $T$ be the associated bounded linear operator on $L^2(\bR^d)$ defined by (\ref{fouriermultiplier}). Always denote by $A$ and $B$ two open subsets in $\bR^d$ that come with the associated connected components and characteristic sets given by (\ref{components}) and (\ref{components2}). The Lebesgue measure is denoted as $|\cdot|$. The terms ``almost everywhere" and ``almost every" with respect to the Lebesgue measure will both be abbreviated as ``a.e.".

We start with a necessary and sufficient condition of (\ref{bedrosianidentityT}) followed from directly applying the Fourier transform to both sides of the identity.

\begin{lemma}\label{mainllemma}
The operator $T$ with the Fourier multiplier $m\in L^\infty(\bR^d)$ satisfies the Bedrosian identity with respect to $(A,B)$ if and only if for a.e. $\xi\in\bR^d$,
\begin{equation}\label{main1lemmaeq}
m(\eta)=m(\xi)\mbox{ a.e. }\eta\in (\xi-A)\cap B.
\end{equation}
\end{lemma}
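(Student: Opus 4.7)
The plan is to Fourier-transform both sides of $T(fg)=f(Tg)$ and use the multiplier relation $\widehat{Tg}=m\hat g$ together with the convolution theorem $\widehat{fg}=(2\pi)^{-d/2}\hat f\ast\hat g$ to recast the Bedrosian identity as the equivalent pointwise condition
\begin{equation*}
\int_{\bR^d} \hat f(\xi-\eta)\,\hat g(\eta)\,[m(\xi)-m(\eta)]\,d\eta = 0 \qquad \text{for a.e. } \xi\in\bR^d,
\end{equation*}
required to hold for every admissible pair $f,g\in L^2(\bR^d)\cap L^\infty(\bR^d)$ with $\supp\hat f\subseteq A$, $\supp\hat g\subseteq B$. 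Here $fg\in L^1\cap L^2$ by Cauchy--Schwarz and $\hat f,\hat g,m\hat g\in L^2$, so Plancherel and the convolution theorem apply without trouble.

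The sufficiency direction is then immediate: the integrand is supported in $(\xi-A)\cap B$ since $\hat f(\xi-\eta)\neq 0$ forces $\xi-\eta\in A$ and $\hat g(\eta)\neq 0$ forces $\eta\in B$, and (\ref{main1lemmaeq}) makes $m(\xi)-m(\eta)=0$ a.e. on this set for a.e. $\xi$, so the integral vanishes.

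For the necessity direction I would specialize to $\hat f=\chi_U$ and $\hat g=\chi_V$, where $U,V$ run over the countable collection $\mathcal{F}$ of open balls with rational centers and rational radii satisfying $\overline U\subset A$, $\overline V\subset B$. Each such choice is admissible, since $\chi_U\in L^1\cap L^2$ forces its inverse Fourier transform to lie in $L^2\cap L^\infty$. The convolution identity then becomes
\begin{equation*}
m(\xi)\,|(\xi-U)\cap V| = \int_{(\xi-U)\cap V} m(\eta)\,d\eta \qquad \text{for a.e. } \xi,
\end{equation*}
with a null exceptional set $E_{U,V}$; because $\mathcal{F}$ is countable, the union $E=\bigcup_{(U,V)\in\mathcal{F}}E_{U,V}$ is still null and the displayed identity holds at every $\xi\notin E$ for \emph{all} such $(U,V)$ simultaneously. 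Given $\xi\notin E$ and any $\eta\in(\xi-A)\cap B$ which is a Lebesgue point of $m$, I would pick $U_n\in\mathcal{F}$ with rational centers converging to $\xi-\eta$ and radii $r_n\to 0$ so that $\xi-\eta\in U_n$, inside a fixed $V\in\mathcal{F}$ containing $\eta$. Then $(\xi-U_n)\cap V=\xi-U_n$ for large $n$, and dividing the identity by $|\xi-U_n|$ yields
\begin{equation*}
m(\xi) = \frac{1}{|\xi-U_n|}\int_{\xi-U_n} m(\eta')\,d\eta'.
\end{equation*}
The balls $\xi-U_n$ shrink regularly to $\{\eta\}$ (they are contained in a ball of radius $O(r_n)$ around $\eta$ with comparable measure), so Lebesgue differentiation at the Lebesgue point $\eta$ forces the right-hand side to tend to $m(\eta)$, recovering (\ref{main1lemmaeq}).

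The main obstacle is the bookkeeping of exceptional null sets: the convolution identity only holds ``for a.e. $\xi$'' with an exceptional set that a priori depends on $f$ and $g$, so the essential trick is to restrict to a countable family of characteristic-function test functions in order to collapse all exceptional sets into a single null set, and only then to invoke Lebesgue differentiation at generic Lebesgue points of $m$ inside that single good set.
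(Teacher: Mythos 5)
Your proposal is correct, and its first half (Fourier transforming the identity, reducing to the vanishing of $\int\hat f(\xi-\eta)\hat g(\eta)(m(\xi)-m(\eta))\,d\eta$, and the sufficiency argument via the support of the integrand) coincides with the paper's proof. Where you genuinely diverge is the necessity step. The paper fixes a ``good'' point $\xi$ and plugs in the $\xi$-dependent test functions $\hat f=\chi_{\xi-E}$, $\hat g=\chi_E$ with $E\subseteq(\xi-A)\cap B$ arbitrary measurable, obtaining $\int_E(m(\xi)-m(\eta))\,d\eta=0$ and concluding at once; this is shorter, but it quietly passes over the fact that the exceptional null set in ``for a.e.\ $\xi$'' depends on the pair $(f,g)$, which here itself depends on $\xi$. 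Your version -- testing only against the countable family of rational balls $U,V$ with $\overline U\subset A$, $\overline V\subset B$, collapsing all exceptional sets into one null set, and then recovering $m(\eta)=m(\xi)$ at Lebesgue points $\eta$ of $m$ via regularly shrinking translated balls $\xi-U_n\subseteq V$ -- is slightly longer but repairs exactly this quantifier issue, at the modest extra cost of invoking the Lebesgue differentiation theorem; note also that your use of balls with closures inside $A$ and $B$ relies on the openness of $A,B$, which is indeed the standing assumption of Section 2, so no generality is lost.
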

\begin{proof}
Let $f,g$ be two arbitrary functions in $L^2(\bR^d)\cap L^\infty(\bR^d)$ such that $\supp\hat{f}\subseteq A$ and $\supp\hat{g}\subseteq B$. By applying the Fourier transform to both sides of the identity (\ref{bedrosianidentityT}), we get that it holds true if and only if for a.e. $\xi\in\bR^d$
\begin{equation}\label{main1lemmaeq1}
\int_{\bR^d}\hat{f}(\xi-\eta)\hat{g}(\eta)(m(\xi)-m(\eta))d\eta=0.
\end{equation}
Let $\xi\in\bR^d$ be fixed and set $\Lambda:=(\xi-A)\cap B$. Taking into account the supports of $\hat{f}$ and $\hat{g}$, one sees that equation (\ref{main1lemmaeq1}) is equivalent to
\begin{equation}\label{main1lemmaeq2}
\int_{\Lambda}\hat{f}(\xi-\eta)\hat{g}(\eta)(m(\xi)-m(\eta))d\eta=0.
\end{equation}
Clearly, if (\ref{main1lemmaeq}) is satisfied then the above equation holds true. Conversely, suppose (\ref{main1lemmaeq2}) is true for a.e. $\xi\in\bR^d$ and for all $f,g\in L^2(\bR^d)\cap L^\infty(\bR^d)$ with $\supp\hat{f}\subseteq A$ and $\supp\hat{g}\subseteq B$. Let $\xi\in\bR^d$ be such a point. We then let $\hat{f}:=\chi_{\xi-E}$ and $\hat{g}:=\chi_E$ in (\ref{main1lemmaeq2}), where $E$ is an arbitrary Lebesgue measurable subset in $\Lambda$ with $|E|<+\infty$ and $\chi_E$ denotes its characteristic function. The substitution yields
$$
\int_E (m(\xi)-m(\eta))d\eta=0,
$$
which immediately implies $m(\xi)=m(\eta)$ for a.e. $\eta\in \Lambda=(\xi-A)\cap B$.
\end{proof}

Another simple fact we shall need is as follows.

\begin{lemma}\label{constant}
Let $\Lambda$ be a connected open subset in $\bR^d$. A Lebesgue measurable function equals a constant a.e. on $\Lambda$ if and only if every $\xi\in \Lambda$ has an open neighborhood on which the function is a.e. constant.
\end{lemma}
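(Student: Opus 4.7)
The forward direction is immediate: if $f = c$ a.e.\ on $\Lambda$, then $f = c$ a.e.\ on any open neighborhood of any $\xi \in \Lambda$.

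For the converse, the plan is to combine a local consistency argument with connectedness of $\Lambda$ and the Lindel\"of property. First I would verify that the local constant is well-defined pointwise: for each $\xi\in\Lambda$ the hypothesis supplies at least one open neighborhood $U_\xi$ and some constant $c_\xi$ with $f = c_\xi$ a.e.\ on $U_\xi$. If two such neighborhoods with constants $c,c'$ both contain $\xi$, their intersection is a nonempty open set of positive Lebesgue measure on which $f$ equals both $c$ and $c'$ a.e., forcing $c = c'$. This defines a map $\xi \mapsto c_\xi$ on $\Lambda$ unambiguously.

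Next, I would show that $\xi \mapsto c_\xi$ is locally constant. Indeed, for any $\eta \in U_\xi$, the set $U_\xi$ itself is an open neighborhood of $\eta$ on which $f = c_\xi$ a.e., so by the well-definedness $c_\eta = c_\xi$. Hence the level sets $\{\xi\in\Lambda : c_\xi = c\}$ are open and pairwise disjoint, and they partition $\Lambda$. Since $\Lambda$ is connected, exactly one of them equals $\Lambda$; call the corresponding value $c$. So $f = c$ a.e.\ on $U_\xi$ for every $\xi\in\Lambda$.

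Finally, I would pass from the local ``a.e.'' statements to a global one. The collection $\{U_\xi\}_{\xi\in\Lambda}$ covers $\Lambda$, and $\Lambda$, being an open subset of $\bR^d$, is Lindel\"of, so there is a countable subcover $\{U_{\xi_n}\}_{n\in\bN}$ with $\Lambda = \bigcup_n U_{\xi_n}$. For each $n$, the exceptional set $N_n := \{x\in U_{\xi_n} : f(x) \neq c\}$ has Lebesgue measure zero, hence $\bigcup_n N_n$ has measure zero, and $f = c$ on $\Lambda \setminus \bigcup_n N_n$, giving $f = c$ a.e.\ on $\Lambda$. The only subtle step here is precisely the reduction to a countable subcover: an uncountable union of null sets need not be null, so one must invoke the Lindel\"of (or second-countable) property of $\bR^d$. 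Everything else is bookkeeping.
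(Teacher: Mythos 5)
Your proof is correct and complete, and it takes a different route from the paper. The paper disposes of the lemma in one line by citing the fact that a connected open subset of $\bR^d$ is path-connected; the implicit argument is to join any two points by a path, cover the compact path by finitely many of the given neighborhoods, and chain the local constants across consecutive overlaps, which are nonempty open sets and hence of positive measure. You instead run the standard ``locally constant implies globally constant'' argument: the well-defined value map $\xi\mapsto c_\xi$ has open level sets which partition $\Lambda$, so connectedness forces a single value $c$, and then the Lindel\"of property of $\Lambda$ turns the local a.e.\ statements into the global one. Your route is more self-contained (it needs only connectedness, not path-connectedness), and it makes explicit the countable-subcover step that the paper leaves entirely implicit --- the passage from ``$f=c$ a.e.\ near every point'' to ``$f=c$ a.e.\ on $\Lambda$'' does require countability, exactly as you note, since an uncountable union of null sets need not be null. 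One small housekeeping point: since the function only matters on $\Lambda$, replace each $U_\xi$ by $U_\xi\cap\Lambda$ (still an open neighborhood of $\xi$, as $\Lambda$ is open) so that the local constant $c_\eta$ is defined at every point $\eta$ of the neighborhood you use in the locally-constant step; this changes nothing else in the argument.
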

\begin{proof}
It suffices to point out that a connected open set in $\bR^d$ is path-connected (see \cite{Gamelin}, page 90).
\end{proof}

We are ready to present the promised proof. In the proof, we shall use the notation $\bB_r(\zeta):=\{x\in \bR^d: \|x-\zeta\|<r\}$ for $r>0$ and $\zeta\in\bR^d$.\newline

{\bf Proof of Theorem \ref{main1}:} We begin with the sufficiency. Suppose that on each characteristic set of $(A,B)$, the Fourier multiplier a.e. equals a constant. We need to show that for a.e. $\xi\in\bR^d$, (\ref{main1lemmaeq}) holds true. Note that since $A,B$ are open, $(\xi-A)\cap B$ is of positive Lebesgue measure if and only if it is nonempty, which happens exactly when $\xi\in A+B$. Thus, if $\xi\notin A+B$ then (\ref{main1lemmaeq}) is automatically true. Let $\xi\in A+B$. Recall the decompositions in (\ref{components}). The point $\xi$ must lie in some connected component $C_k$ of $A+B$. We also have
$$
(\xi-A)\cap B=\bigcup_{i\in I}\bigcup_{j\in J}(\xi-A_i)\cap B_j.
$$
If $ (\xi-A_i)\cap B_j$ is nonempty then $(A_i+B_j)\cap C_k$ contains $\xi$ and hence is nonempty. Since $C_k$ is a connected component of $A+B$, we get for such $A_i$ and $B_j$ that $A_i+B_j\subseteq C_k$, and thus $B_j\subseteq D_k$ by the definitions (\ref{components1}) and (\ref{components2}). The conclusion is that for $\xi\in C_k\subseteq D_k$, $(\xi-A)\cap B\subseteq D_k$. As $m$ is a.e. constant on each $D_k$, (\ref{main1lemmaeq}) holds true for a.e. $\xi\in \bR^d$. The proof for the sufficiency part is complete.

Turning to the necessity, we assume that (\ref{main1lemmaeq}) holds true for a.e. $\xi\in \bR^d$. Our objective is to show that $m$ is a.e. constant on each $D_k$. To this end, we shall first prove that $m$ is a.e. constant on $B_j$ for each $j\in J_k$. Let $j\in J_k$. Then there exists some $i\in I$ such that $A_i+B_j\subseteq C_k$. Set
$$
U:=\{\xi\in A_i+B_j:\ \mbox{(\ref{main1lemmaeq}) holds true}\}.
$$
By our assumption, $|(A_i+B_j)\setminus U|=0$. Set
$$
\check{B_j}:=\bigcup_{\xi\in U}(\xi-A_i)\cap B_j.
$$
We claim that $\check{B_j}=B_j$. Note that
$$
B_j=\bigcup_{\xi\in A_i+B_j}(\xi-A_i)\cap B_j.
$$
As $U\subseteq (A_i+B_j)$, $\check{B_j}\subseteq B_j$. Let $\eta$ be a fixed point in $B_j$. Then there exists some $\xi\in A_i+B_j$ such that $\eta\in (\xi-A_i)\cap B_j$. Since $A_i,B_j$ are open, $\bB_r(\eta)\subseteq (\xi-A_i)\cap B_j$ for some $r>0$. By $|(A_i+B_j)\setminus U|=0$, we can find some point $\xi'\in \bB_{\frac r2}(\xi)\cap U$. A critical observation is that $ \bB_r(\eta)\subseteq (\xi-A_i)$ and $\|\xi-\xi'\|<\frac r2$ imply $\bB_{\frac r2}(\eta)\subseteq (\xi'-A_i)$. As a result, $\eta\in\bB_{\frac r2}(\eta)\subseteq (\xi'-A_i)\cap B_j$. We have hence shown $\check{B_j}=B_j$. It follows that for each $\eta\in B_j$, there exists some point $\xi\in U$ such that $\eta \in (\xi-A_i)\cap B_j\subseteq (\xi-A)\cap B$. Since $\xi\in U$ satisfies (\ref{main1lemmaeq}), $m$ is a.e. constant on $(\xi-A_i)\cap B_j$, which is an open neighborhood of $\eta$. By Lemma \ref{constant}, $m$ a.e. equals a constant $u_j$ on $B_j$.

To continue, let $i\in I$ be such that $A_i+B_j\subseteq C_k$. Recall that for a.e. $\xi\in A_i+B_j$, (\ref{main1lemmaeq}) is true, that is $m(\xi)=m(\eta)$ for a.e. $\eta\in (\xi-A_i)\cap B_j\subseteq B_j$. Therefore, $m$ is a.e. equal to $u_j$ on $A_i+B_j$. Since $C_k$ is the union of certain $A_i+B_j$ such that it remains connected, we obtain that $m$ is a.e. constant on $C_k$. Note that we have just shown that for all $j\in J_k$ and $i\in I$ with $A_i+B_j\subseteq C_k$, $m=u_j$ a.e. on both $A_i+B_j$ and $B_j$. Therefore, $m$ is a.e. constant on each characteristic set $D_k=C_k\cup\bigcup_{j\in J_k}B_j$. $\Box$
\section{Support Sets for the Partial Hilbert Transforms}
\setcounter{equation}{0}

We shall characterize in this section the support sets for the partial Hilbert transforms (\ref{partialHilbert}), and hence give proof for Theorem \ref{main3}. Toward this purpose, we first make several simple observations.

\begin{lemma}\label{compositions}
Let $A,B$ be two Lebesgue measurable subsets in $\bR^d$. If each partial Hilbert transform $H_j$, $1\le j\le d$ satisfies the Bedrosian identity with respect to $(A,B)$ then so does any linear combination of the compositions of $H_j$, $1\le j\le d$.
\end{lemma}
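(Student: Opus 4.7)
The plan is to prove the lemma in two steps—closing the Bedrosian property under linear combinations and under compositions—using the Fourier-multiplier characterization of Lemma \ref{mainllemma} to sidestep an $L^\infty$ technicality that obstructs a purely direct proof of the composition step.

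Closure under linear combinations will come for free from bilinearity of the identity $T(fg)=f\,Tg$ in $T$: for admissible $(f,g)$, one simply has $(\alpha T_1+\beta T_2)(fg)=\alpha f(T_1 g)+\beta f(T_2 g)=f\,(\alpha T_1+\beta T_2)g$. The substantive step is closure under composition. A naive attempt would use the identity for $T_2$ to rewrite $T_1T_2(fg)=T_1(f\cdot T_2 g)$, and then invoke the identity for $T_1$ on the pair $(f, T_2 g)$; the obstacle is that $T_2 g$ is only in $L^2$, not necessarily in $L^\infty$, so Definition 1.1 as written does not apply directly to $(f, T_2 g)$.

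To bypass this I would pass to the Fourier side via Lemma \ref{mainllemma}. Each partial Hilbert transform $H_j$ is translation-invariant with bounded multiplier $m_j(\xi)=-i\sgn(\xi_j)$, so any composition $H_{j_1}\circ\cdots\circ H_{j_n}$ is translation-invariant with multiplier $\prod_r m_{j_r}\in L^\infty(\bR^d)$, and any finite linear combination of such compositions is translation-invariant with multiplier equal to the corresponding linear combination of these products. By Lemma \ref{mainllemma}, the hypothesis that every $H_j$ satisfies the Bedrosian identity with respect to $(A,B)$ is equivalent to the following multiplier statement: for a.e.\ $\xi\in\bR^d$ and each $j\in\{1,\dots,d\}$, $m_j(\eta)=m_j(\xi)$ for a.e.\ $\eta\in(\xi-A)\cap B$.

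The last step will be to verify that this multiplier condition is preserved under finite products and linear combinations. Since only finitely many indices appear in any given combination, I would intersect the finitely many full-measure sets in $\xi$, and for each such $\xi$ intersect the finitely many full-measure subsets of $(\xi-A)\cap B$ in $\eta$, to obtain a common full-measure set on which $m_j(\eta)=m_j(\xi)$ holds simultaneously for every $j$. Any polynomial expression in $m_1,\dots,m_d$—in particular any finite linear combination of finite products of the $m_j$'s—therefore satisfies the same identity on this set, and applying the sufficient direction of Lemma \ref{mainllemma} to this combined multiplier delivers the Bedrosian identity for the corresponding operator. The only point needing care is the bookkeeping of null sets across the finitely many indices, which is routine rather than a genuine obstacle.
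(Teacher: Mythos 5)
Your proof is correct, but it takes a genuinely different route from the paper's. The paper disposes of the lemma in one line at the operator level, invoking the ``straightforward fact'' that if $T_1,T_2$ satisfy the Bedrosian identity with respect to $(A,B)$ then so do $T_1+T_2$ and $T_1T_2$, i.e. $T_1T_2(fg)=T_1(f\,T_2g)=f\,T_1T_2g$; it does not comment on the point you isolate, namely that $T_2g$ is only guaranteed to lie in $L^2$, so Definition 1.1 does not literally apply to the pair $(f,T_2g)$. Making that direct argument airtight requires a small approximation step (e.g. replace $h:=T_2g$ by $h_n$ with $\hat h_n:=\hat h\,\chi_{\bB_n(0)}$, so that $h_n\in L^2\cap L^\infty$, $\supp\hat h_n\subseteq B$, $h_n\to h$ in $L^2$, and then pass to the limit using the $L^2$-boundedness of $T_1$ and of multiplication by $f\in L^\infty$). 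Your detour through Lemma \ref{mainllemma} avoids this issue entirely: the hypothesis becomes $m_j(\eta)=m_j(\xi)$ for a.e. $\eta\in(\xi-A)\cap B$, for a.e. $\xi$ and each $j$, and this condition is manifestly stable under polynomials in $m_1,\dots,m_d$, which is exactly the class of multipliers of linear combinations of compositions of the $H_j$; the null-set bookkeeping over finitely many indices is routine, as you say. The one thing you should state explicitly is that Lemma \ref{mainllemma} is formulated in Section 2 under the standing convention that $A,B$ are open, whereas Lemma \ref{compositions} concerns arbitrary measurable sets; this is harmless, because the proof of Lemma \ref{mainllemma} (testing with $\hat f=\chi_{\xi-E}$, $\hat g=\chi_E$) nowhere uses openness and so extends verbatim to measurable $A,B$, but the extension deserves a sentence. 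With that remark added, your argument is complete, and arguably more careful than the paper's one-line proof, at the cost of using the multiplier machinery where the paper stays at the level of the operators themselves.
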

\begin{proof}
The result is proved by the straightforward fact that if $T_1,T_2$ satisfy the Bedrosian identity with respect to $(A,B)$ then so do $T_1T_2$ and $T_1+T_2$.
\end{proof}

To present the next one, we associate each sign vector $\nu\in\{-1,1\}^d$ with an open hyper-quadrant $Q_{\nu}$ in $\bR^d$ given by
$$
Q_{\nu}:=\{\xi\in\bR^d: \nu_j\xi_j>0,\ 1\le j\le d \}.
$$
The following observation has been made, for example, in \cite{Bulow,Zhang}.

\begin{lemma}\label{partialfourier}
A bounded linear translation-invariant operator $T$ on $L^2(\bR^d)$ is a linear combination of the compositions of $H_j$, $1\le j\le d$ if and only if its Fourier multiplier $m$ satisfies
\begin{equation}\label{multiplierofpartial}
m(\xi)=c_\nu\mbox{ for a.e. }\xi\in Q_{\nu},\ \ \nu\in\{-1,1\}^d,
\end{equation}
where $c_\nu$'s are constants.
\end{lemma}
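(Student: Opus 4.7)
The plan is to work on the Fourier side and reduce the claim to elementary algebra on the finite group $\{-1,1\}^d$.

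For the forward (only if) direction, I would start by noting that (\ref{partialHilbert}) says $H_j$ has Fourier multiplier $-i\sgn(\xi_j)$, and composition on the operator side corresponds to pointwise multiplication on the multiplier side. Hence any composition $H_{j_1}\circ\cdots\circ H_{j_k}$ has multiplier $(-i)^{k}\prod_{\ell=1}^{k}\sgn(\xi_{j_\ell})$. On each open hyper-quadrant $Q_{\nu}$ we have $\sgn(\xi_j)=\nu_j$ for $1\le j\le d$, so this multiplier is a genuine constant on $Q_\nu$. Taking a linear combination preserves this property, which gives (\ref{multiplierofpartial}).

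For the converse, suppose $m$ takes the value $c_\nu$ a.e.\ on $Q_\nu$. I would write, for a.e.\ $\xi\in\bR^d$ (specifically off the coordinate hyperplanes, a null set),
\[
m(\xi)=\sum_{\nu\in\{-1,1\}^d} c_\nu \prod_{j=1}^{d}\frac{1+\nu_j\sgn(\xi_j)}{2},
\]
since the product on the right equals $1$ on $Q_\nu$ and $0$ on every other quadrant. Expanding the product over $j$ rewrites $m$ as a linear combination, with explicit scalar coefficients $\alpha_S\in\bC$, of the functions $\prod_{j\in S}\sgn(\xi_j)$ indexed by subsets $S\subseteq\{1,\dots,d\}$ (the empty product giving the constant $1$). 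Equivalently, one can view this step as inverting the Walsh--Hadamard system $\sum_S \alpha_S \prod_{j\in S}\nu_j=c_\nu$; nonsingularity of that system is the orthogonality of the characters of $(\bZ/2\bZ)^d$.

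Finally, I would define the operator
\[
T':=\sum_{S\subseteq\{1,\dots,d\}}\alpha_S\,i^{|S|}\prod_{j\in S}H_j,
\]
where the composition over the empty set is the identity. The factor $i^{|S|}$ cancels the $(-i)^{|S|}$ coming from the multipliers of the $H_j$'s, so the Fourier multiplier of $T'$ is exactly $\sum_S\alpha_S\prod_{j\in S}\sgn(\xi_j)$, which agrees with $m$ a.e. Since a bounded translation-invariant operator on $L^2(\bR^d)$ is determined by its Fourier multiplier up to a.e.\ equality, $T=T'$, which exhibits $T$ as a linear combination of compositions of the $H_j$'s.

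The only conceptual step is the inversion of the Walsh--Hadamard system, but this is immediate once the problem is framed in terms of the $2^d$ values on quadrants; the rest is bookkeeping of the $(-i)$ factors and the standard correspondence between operators and multipliers. No genuine obstacle is expected.
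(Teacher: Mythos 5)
Your proposal is correct. Note that the paper itself does not prove Lemma \ref{partialfourier}; it records it as a known observation and cites the references \cite{Bulow,Zhang}, so there is no in-paper argument to compare against, and your write-up supplies precisely the missing details. The forward direction is the standard fact that composition of bounded translation-invariant operators multiplies their multipliers, so any composition of the $H_j$'s has multiplier $(-i)^{k}\prod_{\ell}\sgn(\xi_{j_\ell})$, which is constant on each $Q_\nu$, and linear combinations preserve this. Your converse, writing
$m(\xi)=\sum_{\nu}c_\nu\prod_{j}\tfrac{1+\nu_j\sgn(\xi_j)}{2}$ off the coordinate hyperplanes and expanding into the Walsh--Hadamard basis $\prod_{j\in S}\sgn(\xi_j)$, then matching multipliers with $T'=\sum_S\alpha_S i^{|S|}\prod_{j\in S}H_j$ and invoking the a.e.\ uniqueness of the multiplier of a bounded translation-invariant operator on $L^2(\bR^d)$, is complete and is essentially the argument one finds in the cited sources. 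One cosmetic point: your sum includes the identity operator as the empty composition; if one insists that ``compositions of the $H_j$'' be nonempty, this is still harmless, since $H_j^2=-I$ on $L^2(\bR^d)$ (its multiplier is $-\sgn(\xi_j)^2=-1$ a.e.), so the identity is itself a scalar multiple of a composition. No gap.
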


The above two preparations lead to the following key lemma.

\begin{lemma}\label{keyformain3}
Let $A,B$ be two open subsets in $\bR^d$. Then all partial Hilbert transforms $H_j$, $1\le j\le d$, satisfy the Bedrosian identity with respect to $(A,B)$ if and only if each characteristic set of $(A,B)$ is contained in a certain open hyper-quadrant of $\bR^d$.
\end{lemma}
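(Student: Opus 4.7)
The plan is to reduce both directions to Theorem \ref{main1} applied to the Fourier multiplier $m_j(\xi) = -i\sgn(\xi_j)$ of each partial Hilbert transform $H_j$, combined with the observation that every characteristic set $D_k = C_k \cup \bigcup_{j \in J_k} B_j$ is a union of open sets and hence open.

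For the sufficiency, I would assume that every $D_k$ is contained in some open hyper-quadrant $Q_{\nu_k}$. On $Q_{\nu_k}$ the multiplier $m_j$ takes the constant value $-i(\nu_k)_j$, so $m_j$ is identically constant on $D_k$, which is stronger than the a.e.-constant requirement of Theorem \ref{main1}. To verify the matching clause of that theorem, I would use openness of the $D_k$: any nonempty $D_k \cap D_{k'}$ is open and thus of positive measure, and since the open hyper-quadrants $Q_{\nu_k}$ and $Q_{\nu_{k'}}$ are disjoint unless $\nu_k = \nu_{k'}$, this forces $\nu_k = \nu_{k'}$ and the two assigned constants to coincide. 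Theorem \ref{main1} then delivers the Bedrosian identity for every $H_j$.

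For the necessity, I would run Theorem \ref{main1} in the reverse direction. If every $H_j$ satisfies the Bedrosian identity with respect to $(A,B)$, then $m_j(\xi) = -i\sgn(\xi_j)$ must be a.e. constant on each $D_k$, so for each coordinate $j$ exactly one of the open half-spaces $\{\xi_j > 0\}$ or $\{\xi_j < 0\}$ captures $D_k$ up to a null set. Taking the conjunction over $j = 1,\ldots,d$ produces a single sign vector $\nu_k \in \{-1,1\}^d$ with $|D_k \setminus Q_{\nu_k}| = 0$.

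The only genuine subtlety, and thus the main (though modest) obstacle, will be upgrading this almost-containment to genuine containment $D_k \subseteq Q_{\nu_k}$. I would exploit openness of $D_k$ one more time: were some $\xi_0 \in D_k$ to lie outside $Q_{\nu_k}$, some coordinate $(\xi_0)_j$ would be either zero or of the opposite sign, and a small ball $\bB_r(\xi_0) \subseteq D_k$ would intersect a neighboring hyper-quadrant $Q_{\nu}$ with $\nu_j \neq (\nu_k)_j$ on a set of positive Lebesgue measure, contradicting $|D_k \setminus Q_{\nu_k}| = 0$. This closes the equivalence.
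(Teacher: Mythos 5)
Your proposal is correct, and the sufficiency half coincides with the paper's: both observe that $-i\sgn(\xi_j)$ is constant on each open hyper-quadrant and invoke Theorem \ref{main1} (your extra check of the matching clause for $D_k\cap D_{k'}\ne\emptyset$ is harmless, since that condition follows automatically from a.e.\ constancy on an open, hence positive-measure, intersection). The necessity half is where you genuinely diverge. The paper first upgrades the hypothesis via Lemma \ref{compositions} (closure of the Bedrosian property under sums and compositions) so that \emph{every} linear combination of compositions of the $H_j$'s is Bedrosian with $(A,B)$, and then applies Theorem \ref{main1} together with Lemma \ref{partialfourier}, which identifies the multipliers of that operator class as exactly the functions taking arbitrary constants $c_\nu$ on the quadrants $Q_\nu$; choosing distinct constants forces each characteristic set into a single quadrant. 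You instead work directly with the $d$ individual multipliers $m_j(\xi)=-i\sgn(\xi_j)$: a.e.\ constancy of each $m_j$ on the open set $D_k$ pins down a sign $(\nu_k)_j$ for every coordinate (the constant cannot be $0$ because $\{\xi_j=0\}$ is null while $D_k$ has positive measure --- worth one explicit sentence), giving $|D_k\setminus Q_{\nu_k}|=0$, and you then use openness of $D_k$ to upgrade the almost-containment to $D_k\subseteq Q_{\nu_k}$. Your route is more elementary and self-contained, bypassing Lemmas \ref{compositions} and \ref{partialfourier} entirely, and it makes explicit the a.e.-to-genuine containment step that the paper leaves implicit; the paper's route, in exchange, exhibits the connection to the algebra generated by the partial Hilbert transforms, which it reuses elsewhere (e.g.\ in Example 4.3).
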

\begin{proof}
Suppose first that each characteristic set of $(A,B)$ is contained in a single open hyper-quadrant of $\bR^d$. Note that the Fourier multiplier of each $H_j$ is constant on each open hyper-quadrant. By Theorem \ref{main1}, $H_j$ satisfies the Bedrosian identity with respect to $(A,B)$. Conversely, suppose each $H_j$, $1\le j\le d$, satisfies the Bedrosian identity with respect to $(A,B)$. Then by Lemma \ref{compositions}, any operator $T$ that is a linear combination of certain compositions of the partial Hilbert transforms satisfies the Bedrosian identity with respect to $(A,B)$. By Theorem \ref{main1}, the Fourier multiplier of $T$ must be a.e. constant on each characteristic set of $(A,B)$. It follows by Lemma \ref{partialfourier} that each characteristic set of $(A,B)$ must be contained by a single open hyper-quadrant of $\bR^d$.
\end{proof}

When the support sets $A,B$ are both open, we make use of Theorem \ref{main1} to prove Theorem \ref{main3}.\newline

\noindent {\bf Proof:} Let $A,B\subseteq\bR^d$ be nonempty and open. If condition (\ref{supportsetsformain3}) is satisfied then it has been proved in \cite{Stark,VenouziouandZhang,Zhang} that all $H_j$, $1\le j\le d$, satisfy the Bedrosian identity with respect to $(A,B)$. We can now deduce this fact immediately from Lemma \ref{keyformain3} as every characteristic set of $(A,B)$ is clearly contained in a single open hyper-quadrant.

Conversely, suppose each $H_j$ for $1\le j\le d$ satisfies the Bedrosian identity with respect to $(A,B)$. Then by Lemma \ref{keyformain3}, every characteristic set of $(A,B)$ is contained in an open hyper-quadrant. For each sign vector $\nu\in\{-1,1\}^d$, set $B_\nu:=B\cap Q_\nu$. Then
\begin{equation}\label{factorB}
B=\bigcup_{\nu\in\{-1,1\}^d}B_\nu.
\end{equation}
Let $A_i$, $i\in I$ be the connected components of $A$. Assume that $B_\nu$ is nonempty. Let $\Omega$ be a connected component of $B_\nu$. Then for each $i\in I$, $\Omega\cup(A_i+\Omega)$ is contained in a characteristic set of $A+B$, and hence must be contained by a certain open hyper-quadrant in $\bR^d$. As $\Omega\subseteq B_\nu\subseteq Q_\nu$, this open hyper-quadrant must be $Q_\nu$. Thus, we have
$A_i+\Omega\subseteq Q_\nu.$ Since this inclusion holds for any connected component $A_i$ of $A$ and $\Omega$ of $B_\nu$. We obtain
\begin{equation}\label{proofformain3eq1}
A+B_\nu\subseteq Q_\nu\mbox{ for all }B_\nu\ne\emptyset.
\end{equation}
If $B_\nu$ is nonempty then set
\begin{equation}\label{proofformain3eq3}
c^\nu_j:=\inf\{\nu_j\xi_j:\xi\in B_\nu\},\ \ 1\le j\le d.
\end{equation}
Otherwise, set $c^\nu_j:=+\infty$ for $1\le j\le d$. Then equation (\ref{proofformain3eq1}) implies
$$
\{\nu_j\xi_j:\xi\in A\}\subseteq (-c^\nu_j,+\infty),\ \ 1\le j\le d.
$$
We hence obtain for each $\nu\in\{-1,1\}^d$ some nonnegative constants $0\le c^\nu_j\le+\infty$, $1\le j\le d$, such that
\begin{equation}\label{proofformain3eq2}
\nu\cdot B_\nu\subseteq \prod_{j=1}^d(c^\nu_j,+\infty),\  \ \nu\cdot A\subseteq \prod_{j=1}^d (-c^\nu_j,+\infty),
\end{equation}
where for a subset $D\subseteq \bR^d$ and sign vector $\nu\in\{-1,1\}^d$, $\nu\cdot D:=\{(\nu_1\xi_1,\nu_2\xi_2,\cdots,\nu_d\xi_d):\xi\in D\}$. For each $1\le j\le d$, let $a_j:=\min\{c^\nu_j:\nu\in\{-1,1\}^d,\,\nu_j=1\}$ and $b_j:=\min\{c^\nu_j:\nu\in\{-1,1\}^d,\,\nu_j=-1\}$. Then we have by (\ref{proofformain3eq2})
$$
A\subseteq \prod_{j=1}^d(-a_j,b_j).
$$
Let $\xi\in B$. By (\ref{factorB}), $\xi\in B_\nu$ for some $\nu\in\{-1,1\}^d$. Then by definition (\ref{proofformain3eq3}), for each $1\le j\le d$, we get $\xi_j\in (c^\nu_j,+\infty)\subseteq (a_j,+\infty)$ when $\nu_j=1$ and $\xi_j\in (-\infty, -c^\nu_j)\subseteq (-\infty,-b_j)$ when $\nu_j=-1$. Therefore, for all $\xi\in B$, $\xi_j\in \bR\setminus[-b_j,a_j]$ for all $1\le j\le d$, which implies $B\subseteq\prod_{j=1}^d \bR\setminus[-b_j,a_j]$. The proof is complete. $\Box$

\vspace{0.5cm}

In the final part of this section, we shall prove the result in Theorem \ref{main3} for general support sets $A,B$. To this end, we first recall from the introduction that each Lebesgue measurable set $E$ is associated with its {\it essential set}
$$
\tilde{E}:=\{x_0\in\bR^d: |\bB_r(x_0)\cap E|>0\mbox{ for all }r>0\}.
$$
We remark that $|E\setminus \tilde{E}|=0$. The support $\supp f$ of a Lebesgue measurable function $f$ on $\bR^d$ is the essential set of $\{x\in\bR^d:f(x)\ne0\}$. When $f$ is continuous, $\supp f=\overline{\{x\in\bR^d:f(x)\ne0\}}$ as expected.

A few more notations will be used. Denote by $\conv A$ the {\it convex hull} of a set $A\subseteq\bR^d$, which is the smallest convex set that contains $A$. Set for each $1\le j\le d$
$$
\bR^{d}_{j+}:=\{x\in\bR^{d}:x_{j}\ge 0\},\ \ \bR^{d}_{j-}:=\{x\in\bR^{d}:x_{j}\le0\}.
$$
Finally, the convolution of $\varphi,\psi\in L^2(\bR^d)$ is given by
$$
(\varphi*\psi)(x):=\int_{\bR^d} \varphi(x-y)\psi(y)dy,\ \ x\in\bR^d.
$$

We shall need the celebrated Titchmarsh convolution theorem \cite{Lions,Titchmarsh} stated below.
\begin{lemma}\label{Titchmarsh}
If  $\varphi,\psi\in L^2(\bR^d)$ are compactly supported then
$$
\conv\supp \phi\ast\psi =\conv\supp\phi +\conv\supp\psi.
$$
\end{lemma}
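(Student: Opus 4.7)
The plan is to establish the equality by proving two inclusions separately. The inclusion $\conv\supp(\varphi*\psi) \subseteq \conv\supp\varphi + \conv\supp\psi$ is elementary measure-theoretic bookkeeping, while the reverse inclusion is the substantive content of Titchmarsh's theorem and requires Paley--Wiener techniques from several complex variables.

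For the easy inclusion, I would first verify that $\supp(\varphi*\psi) \subseteq \supp\varphi + \supp\psi$. Indeed, if $x_0 \notin \supp\varphi + \supp\psi$, then by compactness of the two supports, there is a neighborhood $U$ of $x_0$ such that for every $x \in U$ the integrand $\varphi(y)\psi(x-y)$ vanishes a.e. in $y$, so $(\varphi*\psi)(x) = 0$ on $U$ and $x_0 \notin \supp(\varphi*\psi)$. Since $\supp\varphi + \supp\psi$ is compact, taking convex hulls and using the standard identity $\conv(E+F) = \conv E + \conv F$ for the Minkowski sum gives $\conv\supp(\varphi*\psi) \subseteq \conv\supp\varphi + \conv\supp\psi$.

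For the hard inclusion, I would use the Paley--Wiener--Schwartz characterization. Every compactly supported $L^2$ function $\varphi$ on $\bR^d$ has a Fourier transform that extends to an entire function $\hat\varphi(\zeta)$ on $\bC^d$ of exponential type, and the convex hull of $\supp\varphi$ is recovered as the intersection of half-spaces
$$
\conv\supp\varphi = \bigcap_{\omega\in S^{d-1}}\bigl\{x\in\bR^d:\, x\cdot\omega\le h_\varphi(\omega)\bigr\},
$$
where the indicator $h_\varphi(\omega):=\limsup_{r\to+\infty}r^{-1}\log|\hat\varphi(ir\omega)|$ coincides with the support function of $\conv\supp\varphi$. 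Since $\widehat{\varphi*\psi}$ is a constant multiple of $\hat\varphi\,\hat\psi$, it suffices to prove the additivity $h_{\hat\varphi\hat\psi}(\omega)=h_\varphi(\omega)+h_\psi(\omega)$ for every $\omega\in S^{d-1}$; combining this with the above description of support functions and the characterization of a convex set by its support function yields the reverse inclusion.

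The main obstacle is the lower bound $h_{\hat\varphi\hat\psi}(\omega)\ge h_\varphi(\omega)+h_\psi(\omega)$; the reverse upper bound is immediate from $|\hat\varphi\hat\psi|\le|\hat\varphi||\hat\psi|$. I would handle this by reducing to dimension one via slicing: for each fixed $\omega$, the Radon transform along hyperplanes perpendicular to $\omega$ intertwines the $d$-dimensional convolution with one-dimensional convolution of slices, so it reduces the problem to the classical one-dimensional Titchmarsh theorem that $\inf\supp(\varphi*\psi)=\inf\supp\varphi+\inf\supp\psi$ (and symmetrically for the supremum). The one-dimensional case itself can be proved by applying Jensen's formula or the argument principle to the entire function $\hat\varphi\hat\psi$ in a single complex variable, exploiting that the exponential type in the upper half-plane adds under multiplication of entire functions. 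This complex-analytic step is the genuine core of the theorem; the higher-dimensional reduction and the easy inclusion are essentially formal.
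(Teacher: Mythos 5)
The paper itself offers no proof of this lemma: it is quoted as a known result, with Titchmarsh's paper covering $d=1$ and Lions' paper covering $d\ge 2$. So your attempt has to be measured against the classical arguments, and there it has a genuine gap.

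Your easy inclusion is correct, and your one-dimensional core (additivity of the indicator/type for Fourier transforms of compactly supported functions, via Jensen or Phragm\'en--Lindel\"of type arguments) is indeed the classical route. The gap is the reduction from dimension $d$ to dimension $1$: both of your devices fail for $d\ge 2$ because of cancellation inside the slices. Concretely, take $d=2$, let $\rho_1,\rho_2\in C_c^\infty(\bR)$ be nontrivial bumps, and set $\varphi(x_1,x_2):=\rho_1(x_1)\rho_2'(x_2)$. The Radon projection in the direction $e_1$ is $(R_{e_1}\varphi)(t)=\rho_1(t)\int_{\bR}\rho_2'(x_2)\,dx_2\equiv 0$, and correspondingly $\hat\varphi(\zeta)=\hat\rho_1(\zeta_1)\,i\zeta_2\hat\rho_2(\zeta_2)$ vanishes identically on the complex line $\bC e_1$, so your indicator satisfies $h_\varphi(e_1)=-\infty$, whereas the support function of $\conv\supp\varphi=\conv(\supp\rho_1\times\supp\rho_2')$ at $e_1$ is finite. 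Thus (i) your claimed identity between the single-ray indicator $h_\varphi(\omega)$ and the support function of $\conv\supp\varphi$ is false in $d\ge2$, and (ii) applying the one-dimensional Titchmarsh theorem to the slices $R_\omega\varphi$, $R_\omega\psi$ only controls $\supp R_\omega(\varphi*\psi)$ in terms of $\sup\supp R_\omega\varphi+\sup\supp R_\omega\psi$, quantities that can be strictly smaller than (even infinitely far below) the true support functions; this reproduces the easy upper bound but can never deliver the required lower bound $h_{\varphi*\psi}(\omega)\ge h_\varphi(\omega)+h_\psi(\omega)$.

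The missing idea --- and it is exactly how Lions and, later, H\"ormander carry out the reduction --- is to use all complex lines in the direction $\omega$, not just the one through the origin: one studies $r\mapsto\hat\varphi(\xi+ir\omega)$ for arbitrary real base points $\xi$, i.e.\ the one-dimensional Laplace transforms of the \emph{modulated} slices $t\mapsto\int_{x\cdot\omega=t}\varphi(x)e^{-ix\cdot\xi}\,d\sigma(x)$. Cancellation cannot occur for every $\xi$: if all modulated slices vanished for $t>c$, Fourier inversion in the variables transverse to $\omega$ would force $\varphi=0$ on $\{x\cdot\omega>c\}$. One then applies the one-dimensional theorem along generic lines (the exceptional base points form a null set, simultaneously for $\varphi$ and $\psi$) and takes suprema over $\xi$. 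Without this step, your argument establishes only the inclusion you yourself labeled as easy.
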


Another simple observation is made.

\begin{lemma}\label{main4lemma}
Let $f,g\in L^2(\bR^d)\cap L^\infty(\bR^d)$. Then $H_j(fg)=fH_jg$ for all $1\le j\le d$ if and only if for each $1\le j\le d$
\begin{equation}\label{main4lemmaeq1}
\supp \hat{f}*(\hat{g}\chi_{\bR^d_{j-}})\subseteq \bR^d_{j-}
\end{equation}
and
\begin{equation}\label{main4lemmaeq2}
\supp \hat{f}*(\hat{g}\chi_{\bR^d_{j+}})\subseteq \bR^d_{j+}.
\end{equation}
\end{lemma}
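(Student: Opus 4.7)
My plan is to take the Fourier transform of both sides of the identity $H_j(fg)=fH_jg$ and repackage the result as the support conditions on the stated convolutions. First I would justify pointwise convolution formulas. Since $f,g\in L^2(\bR^d)\cap L^\infty(\bR^d)$, Cauchy--Schwarz gives $fg\in L^1(\bR^d)\cap L^2(\bR^d)$, and since $H_j$ is bounded on $L^2(\bR^d)$ with $f\in L^\infty(\bR^d)$, also $fH_jg\in L^1(\bR^d)\cap L^2(\bR^d)$. Their Fourier transforms are thus continuous functions and, up to the constant $(2\pi)^{-d/2}$, equal $\hat f\ast\hat g$ and $\hat f\ast(-i\sgn(\cdot_j)\hat g)$ respectively, where the convolutions are pointwise-defined continuous functions because $\hat f,\hat g\in L^2(\bR^d)$. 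The identity $H_j(fg)=fH_jg$ is therefore equivalent, after cancelling $-i/(2\pi)^{d/2}$, to
\[
\sgn(\xi_j)\,(\hat f\ast\hat g)(\xi)=\bigl(\hat f\ast(\sgn(\cdot_j)\hat g)\bigr)(\xi)\quad\text{for a.e.\ }\xi\in\bR^d,
\]
and the equivalence can be treated one index $j$ at a time.

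Next I would split $\hat g=\hat g\chi_{\bR^{d}_{j+}}+\hat g\chi_{\bR^{d}_{j-}}$ (the overlap $\{\xi_j=0\}$ being a null set) and introduce
\[
u_+:=\hat f\ast(\hat g\chi_{\bR^{d}_{j+}}),\qquad u_-:=\hat f\ast(\hat g\chi_{\bR^{d}_{j-}}).
\]
Then $\hat f\ast\hat g=u_++u_-$ and $\hat f\ast(\sgn(\cdot_j)\hat g)=u_+-u_-$, so the displayed identity is equivalent to
\[
\sgn(\xi_j)\bigl(u_+(\xi)+u_-(\xi)\bigr)=u_+(\xi)-u_-(\xi)\quad\text{a.e.}
\]
On $\{\xi_j>0\}$ this reduces to $u_-(\xi)=0$, while on $\{\xi_j<0\}$ it reduces to $u_+(\xi)=0$. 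Because $u_+,u_-$ are continuous, these vanishing statements are precisely $\supp u_-\subseteq\bR^{d}_{j-}$ and $\supp u_+\subseteq\bR^{d}_{j+}$, i.e.\ (\ref{main4lemmaeq1}) and (\ref{main4lemmaeq2}). The reverse implication follows by running the same algebra backwards: if $u_-$ vanishes on $\{\xi_j>0\}$ and $u_+$ vanishes on $\{\xi_j<0\}$, then both sides of the reduced identity agree pointwise outside the null set $\{\xi_j=0\}$.

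I do not anticipate any real obstacle; the argument is a short direct computation once integrability is in place so that the Fourier transforms of $fg$ and $fH_jg$ admit the pointwise convolution representations above. The one small subtlety is that the equivalence decouples across coordinates, so the simultaneous statement for all $1\le j\le d$ follows at once from the scalar equivalence applied to each index separately.
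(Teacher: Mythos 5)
Your proof is correct and follows essentially the same route as the paper: apply the Fourier transform to $H_j(fg)=fH_jg$, obtain the a.e.\ identity $\sgn(\xi_j)(\hat f\ast\hat g)=\hat f\ast(\sgn(\cdot_j)\hat g)$ (the paper's integral equation), and observe that on $\{\xi_j>0\}$ and $\{\xi_j<0\}$ it reduces to the vanishing of $\hat f\ast(\hat g\chi_{\bR^d_{j-}})$ and $\hat f\ast(\hat g\chi_{\bR^d_{j+}})$, respectively. Your write-up merely supplies the integrability, continuity-of-convolution, and $u_\pm$ splitting details that the paper leaves implicit in its final sentence.
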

\begin{proof}
Let $1\le j\le d$. Recall that the multiplier of $H_j$ is $-i\sgn(\xi_j)$. By applying the Fourier transform to both sides of $H_j(fg)=fH_jg$, we see that the identity holds if and only if
$$
\int_{\bR^d}\hat{f}(\xi-\eta)\hat{g}(\eta)(\sgn(\xi_j)-\sgn(\eta_j))d\eta=0, \ \ \ae \ \xi\in\bR^d.
$$
The above equation is equivalent to (\ref{main4lemmaeq1}) and (\ref{main4lemmaeq2}).
\end{proof}

We now prove Theorem \ref{main3} by the Titchmarsh convolution theorem.\newline

\noindent {\bf Proof of Theorem \ref{main3}: } The sufficiency has been proved in \cite{Stark,VenouziouandZhang,Zhang}. We need to deal with the necessity here. Let $A,B\subseteq\bR^d$ be Lebesgue measurable. Suppose that all partial Hilbert transforms $H_j$, $1\le j\le d$ satisfy the Bedrosian identity with respect to $(A,B)$. By Lemma \ref{main4lemma}, equations (\ref{main4lemmaeq1}) and (\ref{main4lemmaeq2}) hold true for each $1\le j\le d$ and for all $f,g\in L^2(\bR^d)\cap L^{\infty}(\bR^d)$ with $\supp \hat{f}\subseteq A$, $\supp \hat{g}\subseteq B$. We claim that it must follow that for all $1\le j\le d$
\begin{equation}\label{main4key1}
\xi_j+\eta_j\le 0 \mbox{ for all }\xi\in \tilde{A},\ \eta\in \tilde{B}_{j-}
\end{equation}
and
\begin{equation}\label{main4key2}
\xi_j+\eta_j\ge 0 \mbox{ for all }\xi\in \tilde{A},\ \eta\in \tilde{B}_{j+}.
\end{equation}
Here, $\tilde{B}_{j-}:=\tilde{B}\cap \bR^d_{j-}$ and $\tilde{B}_{j+}:=\tilde{B}\cap \bR^d_{j+}$.

Let $1\le j\le d$. We shall prove (\ref{main4key1}) by contradiction. The other claim (\ref{main4key2}) can be verified in a similar manner. Assume to the contrary that for some $\xi'\in \tilde{A}$ and $\eta'\in\tilde{B}_{j-}$, $\xi'_j+\eta_j'>0$. Then there exist open balls $U,V$ of sufficiently small radius such that $\xi'\in U, \eta'\in V$, $|A\cap U|>0$, $|B\cap V|>0$, and $V\subseteq \bR^d_{j-}$. Introduce
$$
\hat{f}(\xi):=e^{-\|\xi\|^2}\chi_{A\cap U}(\xi),\ \hat{g}(\xi):=e^{-\|\xi\|^2}\chi_{B\cap V}(\xi), \ \ \xi\in\bR^d.
$$
Note that $f,g$ defined above do lie in $L^2(\bR^d)\cap L^\infty(\bR^d)$. Now since $\xi'\in\supp\hat{f}$, $\eta'\in\supp\hat{g}$, and $\xi'_j+\eta'_j>0$,
$$
\supp \hat{f}+ \supp \hat{g}\nsubseteq \bR^d_{j-}.
$$
As $\hat{f}, \hat{g}$ are of compact support, we get by Lemma \ref{Titchmarsh}
$$
\conv\supp \hat{f}*\hat{g}= \conv\supp \hat{f}+ \conv\supp \hat{g} \nsubseteq \bR^d_{j-}.
$$
Consequently, $\supp \hat{f}*(\hat{g}\chi_{\bR^d_{j-}})=\supp\hat{f}*\hat{g} \nsubseteq \bR^d_{j-}$, contradicting (\ref{main4lemmaeq1}).

We have hence proved (\ref{main4key1}) and (\ref{main4key2}) for all $1\le j\le d$. Set for each $1\le j\le d$,
$$
b_j:=\inf\{-\eta_j: \eta\in \tilde{B}_{j-}\}
$$
if $\tilde{B}_{j-}$ is nonempty. Otherwise, set $b_j:=+\infty$. Equation (\ref{main4key1}) then implies that $\tilde{A}\subseteq\{\xi\in\bR^d:\xi_j\le b_j\}$ for each $1\le j\le d$. Also, set
$$
a_j:=\inf\{\eta_j: \eta\in \tilde{B}_{j+}\}
$$
if $\tilde{B}_{j+}$ is nonempty. Otherwise, set $a_j:=+\infty$. It follows from (\ref{main4key2}) that $\tilde{A}\subseteq\{\xi\in\bR^d:\xi_j\ge -a_j\}$ for each $1\le j\le d$. The proof is complete by noticing that the set inclusions in (\ref{supportsetsformain3}) are satisfied. $\quad\Box$

\section{Examples}
\setcounter{equation}{0}

We present a few examples to illustrate Theorem \ref{main1} and Corollary \ref{main2}. For brevity, we shall call an operator $T$ Bedrosian with $(A,B)$ to mean that it satisfies the Bedrosian identity with respect to $(A,B)$. Furthermore, ``bounded linear translation-invariant" will be abbreviated as ``BLTI".

\begin{description}
\item[{\bf Example 4.1} (Balls)] Fixed $r>0$ and $\zeta\in\bR^d$. Let $A:=\bB_r(\zeta)$ and $B:=\{x\in\bR^d:\|x+\zeta\|>r\}$. Then $A+B=\bR^d\setminus\{0\}$. There are three cases.
    \begin{enumerate}[(i)]
    \item $\|\zeta\|>r$. Then $0\in B$. As a result, we have the whole space $\bR^d$ as the only characteristic set. By Corollary \ref{main2}, there does not exist a nontrivial BLTI operator on $L^2(\bR^d)$ that is Bedrosian with $(A,B)$.
    \item $\|\zeta\|\le r$ and $d=1$. In this case, we have two characteristic sets $(0,+\infty)$ and $(-\infty,0)$. By Theorem \ref{main1}, $T$ is Bedrosian with $A$ and $B$ if and only if its multiplier has the form
    $$
    m(\xi):=\left\{
    \begin{array}{ll}
    c_1,&\mbox{for almost every }\xi>0,\\
    c_2,&\mbox{for almost every }\xi<0,
    \end{array}
    \right.
    $$
    where $c_1$ and $c_2$ are constants. One observes that the Fourier multiplier of $T$ is given above if and only if $T$ is a linear combination of the Hilbert transform and the identity operator.
    \item $\|\zeta\|\le r$ and $d\ge 2$. In this case, $\bR^d\setminus\{0\}$ is connected and hence is the only characteristic set of $(A,B)$. By Corollary \ref{main2}, we do not have a nontrivial BLTI operator on $L^2(\bR^d)$ that is Bedrosian with $(A,B)$.

    We make remarks about cases (ii) and (iii) above. It has been obtained in a different way in \cite{Zhang} that when $d\ge2$, there is not a nontrivial BLTI operator on $L^2(\bR^d)$ that is Bedrosian with $A:=\bB_r(0)$ and $B:=\{x\in\bR^d:\|x\|>r\}$ for all $r>0$. We now know that a single pair of such support sets is enough to force the nonexistence for $d\ge 2$, and understand it is connectedness that causes the fundamental difference between $d=1$ and $d\ge 2$. Case (iii) is plotted below.
    \begin{figure}[htbp]
    \centering
    \includegraphics[width=0.4\textheight]{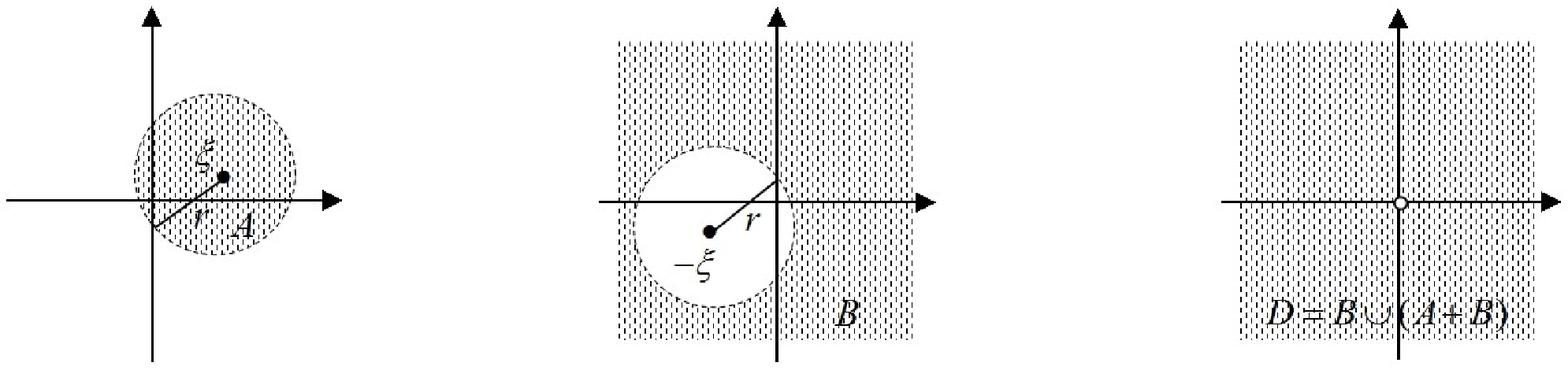}\\
    \end{figure}
    \end{enumerate}

\item[{\bf Example 4.2 (Modified balls)}]  Let $d\ge2$, $r,\varepsilon_0$ be two positive constants, and $\zeta\in\bR^d$ with $\|\zeta\|\le r$. Set $A:=\bB_r(\zeta)$ and $B:=\{x\in\bR^d:\|x+\zeta\|>r+\varepsilon_0\}$. Then there is only one characteristic set of $(A,B)$, which is $\{x\in\bR^d:\|x\|>\varepsilon_0\}$. By Corollary \ref{main2}, there are nontrivial BLTI operators on $L^2(\bR^d)$ that are Bedrosian with $(A,B)$. By Theorem \ref{main1}, such operators must have a Fourier multiplier of the following form
    $$
    m(\xi):=\left\{
    \begin{array}{ll}
    c,&\mbox{for almost every }\|\xi\|>\varepsilon_0,\\
    \varphi(\xi),&\mbox{for }\|\xi\|< \varepsilon_0,
    \end{array}
    \right.
    $$
    where $c$ is a constant and $\varphi$ is an arbitrary function in $L^\infty(\bB_{\varepsilon_0}(0))$. The difference from Example 4.1 is that by enlarging the distance of $B$ to its center a little bit, one gets nontrivial operators that satisfy the Bedrosian identity. The support sets and the characteristic set in this case are illustrated below.


\begin{figure}[htbp]
  \centering
  \includegraphics[width=0.4\textheight]{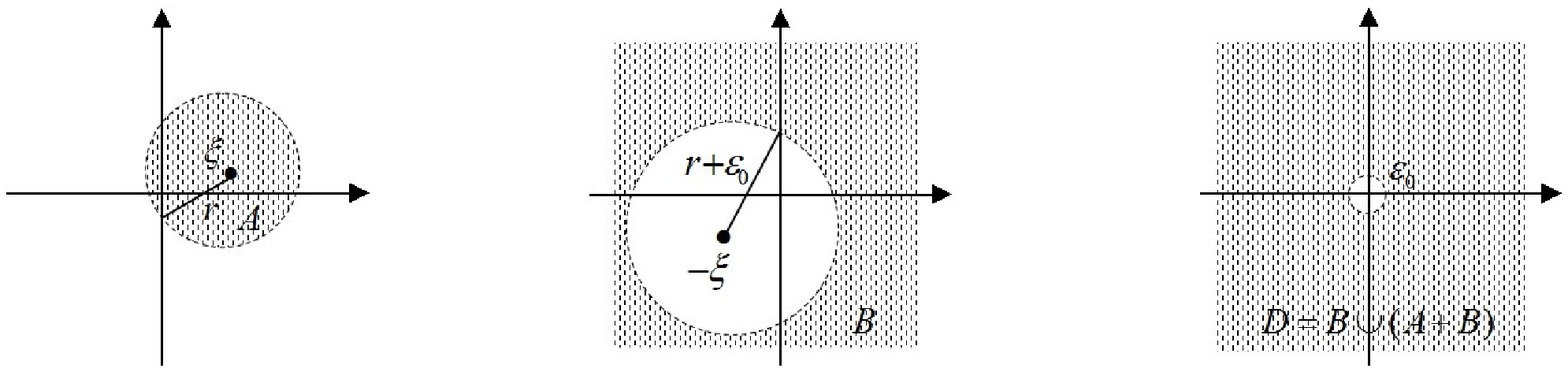}\\
\end{figure}

\item[{\bf Example 4.3 (Rectangular support sets)}]  Let $a_j, b_j$ be positive constants, $1\le j\le d$. Set
    \begin{equation}\label{boxes}
    A:=\prod_{j=1}^d (-a_j,b_j),\ \ B:=\prod_{j=1}^d \bR\setminus [-b_j,a_j].
    \end{equation}
    In this case, $(A,B)$ has $2^d$ characteristic sets, which are exactly the $2^d$ open hyper-quadrants in $\bR^d$. By Theorem \ref{main1}, $T$ is Bedrosian with $(A,B)$ if and only if on each open hyper-quadrant, the Fourier multiplier of $T$ is a.e. constant, which is also equivalent to that $T$ is a linear combination of the compositions of the partial Hilbert transforms on $\bR^d$.

    We mention that it was proved in \cite{VenouziouandZhang} that a BLTI operator $T$ on $L^2(\bR^d)$ is Bedrosian with $(A,B)$ given by (\ref{boxes}) for all $a_j,b_j>0$, $1\le j\le d$, if and only if it is a linear combination of the compositions of the partial Hilbert transforms. The improvement of our result here is again that we show a single pair of support sets of the form (\ref{boxes}) is enough to ensure that the corresponding operator $T$ must be given as described above. We illustrate this example below.
\begin{figure}[htbp]
  \centering
  \includegraphics[width=0.5\textheight]{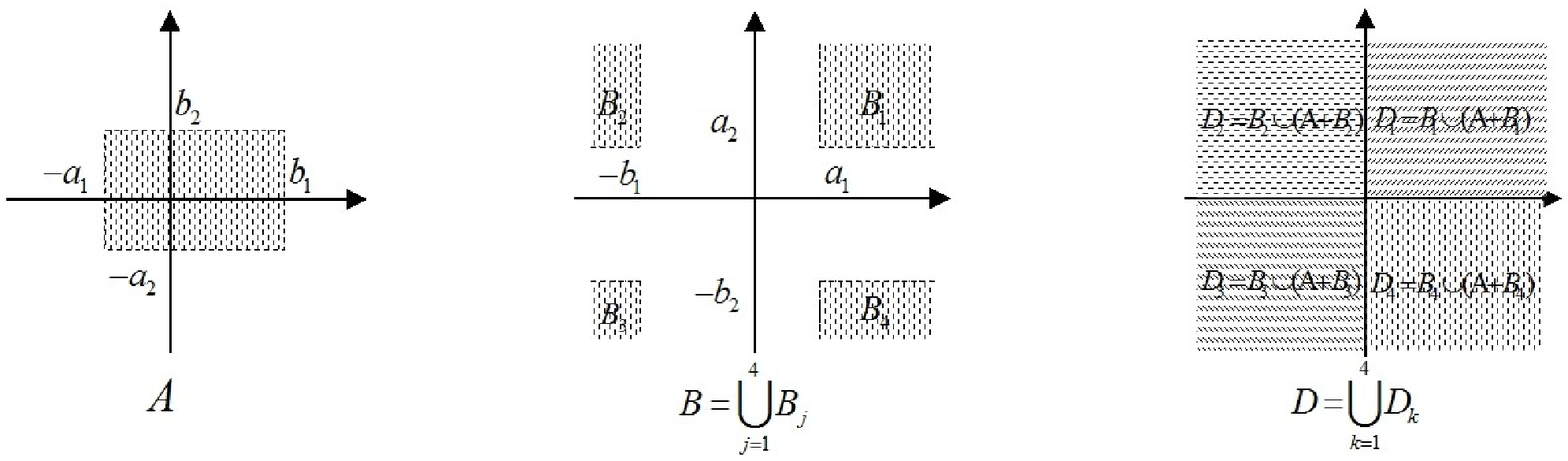}\\
\end{figure}

\item[{\bf Example 4.4} (Hyper-quadrants)]  Let $\mu,\nu\in \{-1,1\}^d$ and set $A:=Q_\mu$, $B:=Q_\nu$. There are two cases.
    \begin{enumerate}[(i)]
    \item $-A=B$. Then $A+B=\bR^d$. Thus, there does not exist a nontrivial BLTI operator that is Bedrosian with $(A,B)$.

    \item $-A\ne B$. Let $\bI:=\{j: 1\le j\le d, \mu_j=\nu_j\}$. Then we have only one characteristic set, which is $$
        (A+B)\cup B=\{\xi\in\bR^d: \mu_j\xi_j>0,\ j\in \bI\}.
        $$
     In this case, we have nontrivial BLTI operators that are Bedrosian with $(A,B)$. By Theorem \ref{main1}, the Fourier multipliers of such operators are a.e. constant on the characteristic set above. The support sets and the characteristic set in this case are illustrated below.
    \begin{figure}[htbp]
    \centering
    \includegraphics[width=0.4\textheight]{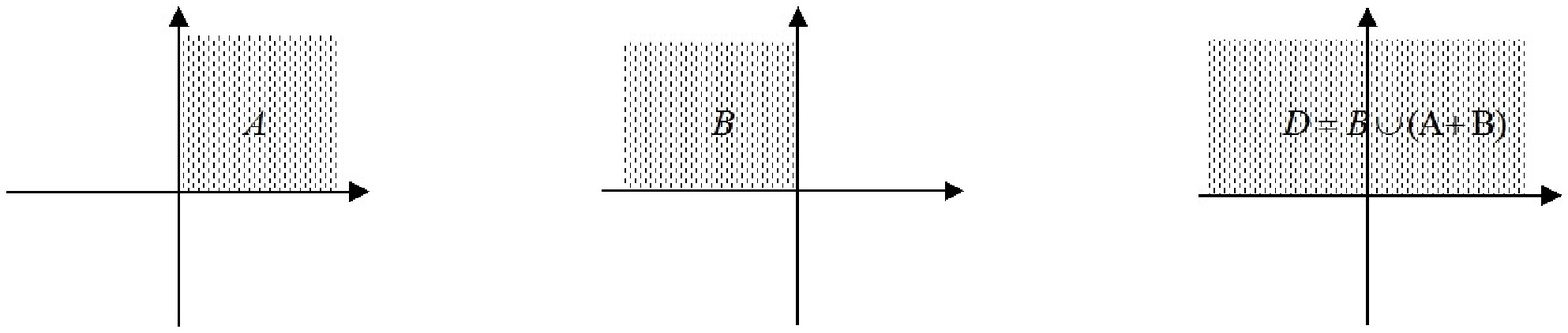}\\
    \end{figure}

     \end{enumerate}

\item[{\bf Example 4.5} (Modified rectangles)]  The support sets $A$ and $B$ are interpreted as respectively containing the low and high Fourier frequency (see, for example, \cite{Picibono}). What will happen if we interchange their roles? Let $a_j, b_j$ be positive constants, $1\le j\le d$. Set
    \begin{equation}
    A:=\prod_{j=1}^d \bR\setminus [-b_j,a_j],\ \ B:=\prod_{j=1}^d (-a_j,b_j).
    \end{equation}
    Then $A+B$ has the whole space $\bR^d$ as its only characteristic set. Consequently, there does not exist a nontrivial BLTI operator on $L^2(\bR^d)$ that is Bedrosian with $(A,B)$.

\item[{\bf Example 4.6} (Bounded support sets)]  If $A$ and $B$ are both bounded open subsets in $\bR^d$ then all characteristic sets of $(A,B)$ are bounded. Therefore, there must exist nontrivial BLTI operators on $L^2(\bR^d)$ that are Bedrosian with $(A,B)$.


\end{description}

{\small
\bibliographystyle{amsplain}

}
\end{document}